\newcommand{\subalign}[1]{%
	\vcenter{%
		\Let@ \restore@math@cr \default@tag
		\baselineskip\fontdimen10 \scriptfont\tw@
		\advance\baselineskip\fontdimen12 \scriptfont\tw@
		\lineskip\thr@@\fontdimen8 \scriptfont\thr@@
		\lineskiplimit\lineskip
		\ialign{\hfil$\m@th\scriptstyle##$&$\m@th\scriptstyle{}##$\hfil\crcr
			#1\crcr
		}%
	}%
}
\theoremstyle{theorem}
\newtheorem{thm}{Theorem}[section]
\newtheorem{lemma}{Lemma}[section]
\theoremstyle{definition}
\newtheorem{controlstrategy}{Control Strategy}[section]
\newtheorem{assumption}{Assumption}[section]
\newtheorem{definition}{Definition}[section]
\newcommand{\pars}{\pi}
\newcommand{\R}{\mathbb{R}}
\newcommand{\C}{\mathbb{C}}
\newcommand{\unders}{\underline s}
\newcommand{\overs}{\overline s}
\newcommand{\Ms}{\mathcal M^{\rm s}}
\newcommand{\Mu}{\mathcal M^{\rm u}}
\DeclareMathOperator{\sgn}{sgn}
\DeclareMathOperator{\Real}{Re}
\newcommand{\RM}[1]{{#1}}
\title{\LARGE \bf
Control of multistability through local sensitivity analysis: \\ application to cellular decision-making networks\thanks{This work was supported by DGAPA-PAPIIT(UNAM) grant n. IN102420 and Conacyt CB grant n. A1-S-10610.}
}
\author{Rodrigo Moreno-Morton$^{\dagger1}$ and Alessio Franci$^{\dagger2}$% <-this % stops a space
\thanks{$\dagger$ Math Department at the Faculty of Sciences, National Autonomous University of Mexico (UNAM). $^{1}${\footnotesize luiromormor@gmail.com}. $^{2}${\footnotesize afranci@ciencias.unam.mx}}
}
\begin{document}

\maketitle
\thispagestyle{empty}
\pagestyle{empty}

\begin{abstract}
	Control of multistable dynamics has important applications, from physics to biology \RM{but the complexity of the systems of differential equations used for their modeling often makes this problem intractable from a global perspective. Here, we propose that for a certain class of multi-stable dynamical systems, including monotone systems, linearized control at the stable and saddle points of the multi-stable dynamics can lead to predictable global changes in the relative sizes and depths of its basins of attraction.} Our parameter control signal is computationally cheap and provides counter-intuitive information about the sensitive parameters to be manipulated in an experimental setting.
\end{abstract}

\section{Introduction}

Multistable dynamics and their control  appear in a variety of physical, engineered, and biological systems~\cite{pisarchik2014control}. Opinion-formation and decision-making networks also exhibit multistability and are receiving increasing attention due to their relevance in sociopolitical systems and for collective behaviors in both biological and artificial agent groups. In these networks, each attractor corresponds to a decision state~(see~\cite{bizyaeva2022} and references therein). Another important example of multistable decision-making are molecular regulatory networks, where decisions correspond to cellular phenotypes~\cite{balazsi2011cellular}. Cellular decision-making can be functional, e.g., development, but also deleterious for the organism, e.g., tumor formation.
%The switch between different phenotypes is a kind of cellular decision-making, in which the cell ``decides'' to change its phenotype in response to environmental or internal signals. 

A well studied case of cellular decision-making with both functional or deleterious consequences is the epithelial-mesenchymal transition (EMT), a change in phenotype of epithelial cells from a clearly polarized, epithelium-adhered cell (epithelial phenotype) to a non-polarized, free-moving cell (mesenchymal phenotype)~\cite{nieto_emt_2016, yang_epithelial-mesenchymal_2008}. EMT plays an important role in several stages of embryonic development and tissue repair but also in tumor metastasis~\cite{chakrabarti_elf5_2012}. Understanding how the EMT is regulated and how it may be controlled is relevant both for biology and medicine.

%Cellular decision making as transition in multi-stable molecule regulatory networks. Decisions are phenotypes: different interaction with the environment, other cells, different produced molecules, etc...
%Cellular decision making can be deleterious for the organism: cancer, EMT. Hence the need to control it.

Global analysis and control of multi-stable dynamics are hard in general. Although almost global Lyapunov functions are known to exists for multistable systems~\cite{efimov2012global}, their analytical computation is impossible except for simple low-dimensional examples.  Some local approaches, based on the model linearization, have been developed for discrete-time dynamical systems with chaotic dynamics~\cite{pisarchik2014control}.

Motivated by EMT control, in this paper we focus on multi-stable dynamics in which the only attractors are exponentially stable equilibria and in which the boundaries of the basins of attraction are the stable manifolds of saddle points. \RM{Monotone dynamical systems~\cite{smith1988systems} in any dimension are important representatives of this class of systems}.

\RM{The paper contributions are the following. Under suitable monotonicity assumptions, we prove the existence of a simple relation between the stability margin of an equilibrium of a one-dimensional multistable dynamical system and the size of its basin of attraction. We single out a class of multistable dynamical systems in arbitrary dimension for which the one-dimensional theory suggests a simple strategy to control the relative size and depth of the various basins of attraction. Although grounded on heuristic arguments, our control strategy solely uses the local sensitivity of the linearized model dynamics at the relevant equilibria. In particular, it does not require the knowledge of an analytic expression for the model almost-global Lyapunov function. Its effectiveness is illustrated on a simple two-dimensional monotone dynamical system and on a novel four-dimensional model of the EMT.}

\RM{The one-dimensional theory is illustrated in Section~\ref{sec: 1d theory}. The relevant class of high-dimensional dynamics and the proposed strategy for their multistability control are introduced in Section~\ref{sec: high dim}. Applications to a two-dimensional monotone dynamics and to a four-dimensional EMT model are illustrated in Section~\ref{sec: 2d example} and Section~\ref{sec: MET control}. Limitations are discussed in Section~\ref{sec: conclusions}. Section~\ref{SEC: code} provides the project GitHub page.}

\section{Notation and Definitions}
\label{sec: not def}

$\R$ denotes the set of real numbers. $\mathbb N$ denotes the set of positive integers. $\mathbb C$ denotes the set of complex numbers. $\langle\cdot,\cdot\rangle:\R^n\times\R^n\to\R$ denotes the Euclidean product in $\R^n$ and $\|\cdot\|:\R^n\to\R$ its induced norm. Given $x\in\R$, $\sgn(x)$ denotes its sign, i.e., $\sgn(x)=-1$ if $x<0$, $\sgn(x)=0$ if $x=0$, and $\sgn(x)=1$ if $x>0$. Given $z\in\mathbb C$, $\Real(z)$ denotes its real part. A set $\mathcal K\subset\R^n$ is a linear cone if for all $x\in\mathcal K$ and all $a>0$, $ax\in\mathcal K$. \RM{An orthant $K\subset\R^n$ is the cone $\{x\in\R^n:(-1)^{m_i}x_i\geq 0,\,i=1,\ldots,n,\, m_i\in\{0,1\}\}$.}
%Given $x\in\R^n$ a set $\mathcal K_x$ is an affine cone with vertex at $x$ if there exists a linear cone $\mathcal K$ such that, for all $v\in\mathcal K_x$, $v=w+x$ with $w\in\mathcal K$.

Consider a smooth dynamical system
\begin{equation}\label{eq: general dyn syst}
	\frac{dx}{dt}=:\dot x=f(x),\quad f:\R^n\to\R^n.
\end{equation}
with flow $\varphi:\R^n\times\R\to\R^n$, i.e., given $y\in\R^n$, $x(t)=\varphi(y,t)$ is the solution of~\eqref{eq: general dyn syst} at time $t$ with initial conditions $x(0)=y$.
An equilibrium $x^*$ of~\eqref{eq: general dyn syst} is called hyperbolic if the Jacobian $\frac{\partial f}{\partial x}(x^*)\in\mathbb\R^{n\times n}$ has no eigenvalues on the imaginary axis. The stable manifold of an equilibrium $x^*$ is the set
$\left\{x\in\R^n\,:\, \lim_{t\to\infty}\varphi(x,t)=x^*\right\}$. The unstable manifold of an equilibrium $x^*$ is the set
$\left\{x\in\R^n\,:\, \lim_{t\to-\infty}\varphi(x,t)=x^*\right\}$. If a hyperbolic equilibrium $x^*$ is stable, that is, if $\frac{\partial f}{\partial x}(x^*)$ has only eigenvalues with negative real part, then its stable manifold is called its basin of attraction. A set $U\subset\R^n$ is called invariant for model~\eqref{eq: general dyn syst} if $x\in U$ implies $\varphi(x,t)\in U$ for all $t\in\R$. \RM{A trajectory is called heteroclinic if $\lim_{t\to-\infty}\varphi(x,t)=x_1^*$ and $\lim_{t\to\infty}\varphi(x,t)=x_2^*$ for two distinct equilibria $x_1^*,x_2^*$. Given a linear cone $\mathcal K$, a trajectory $\varphi(x,t)$ is called $\mathcal K$-monotone if $\varphi(x,t_2)-\varphi(x,t_1)\in\mathcal K$ for all $t_2>t_1$.}

\section{One-dimensional theory}
\label{sec: 1d theory}

Consider a one-dimensional dynamical system
\begin{equation}\label{eq: generic 1D}
	\dot x = f(x) = -F'(x)
\end{equation}
where $f:\R\to\R$ is smooth and $F$ is a primitive of $-f$, i.e., $F(x)=-\int_0^x f(y)dy+C$, $C\in\R$. $F(x)$ is an almost global Lyapunov function for~\eqref{eq: generic 1D}, in the sense of~\cite{efimov2012global}. Suppose that $x_0$ is an exponentially stable equilibrium of~\eqref{eq: generic 1D}, i.e.,
\begin{equation}\label{eq: stable eq 1D}
	F'(x_0)=0,\quad F''(x_0)>0.
\end{equation}
Generically there are three possible cases for the geometry of the basin of attraction of $x_0$: unbounded (Figure~\ref{fig: basins illustration}a), half-bounded (Figure~\ref{fig: basins illustration}b,c), bounded (Figure~\ref{fig: basins illustration}d). Boundaries of the basin of attraction, when they exist, are given by unstable points $\unders_0$, $\overs_0$, with $\unders_0 < x_0 < \overs_0$, satisfying \footnote{We denote the unstable equilibria with an $s$ because in higher dimension they will correspond to {\it saddle} points.}
\begin{equation}\label{eq: unstable eqs 1D}
	F'(\unders_0)=F'(\overs_0)=0,\quad F''(\unders_0),F''(\overs_0)<0.
\end{equation}
We focus on the bounded case (Figure~\ref{fig: basins illustration}d) but we also remark that our theoretical developments apply naturally to the bounded side of the half-bounded cases (Figure~\ref{fig: basins illustration}b,c).

\begin{figure}
	\centering
	\includegraphics[width=0.32\textwidth]{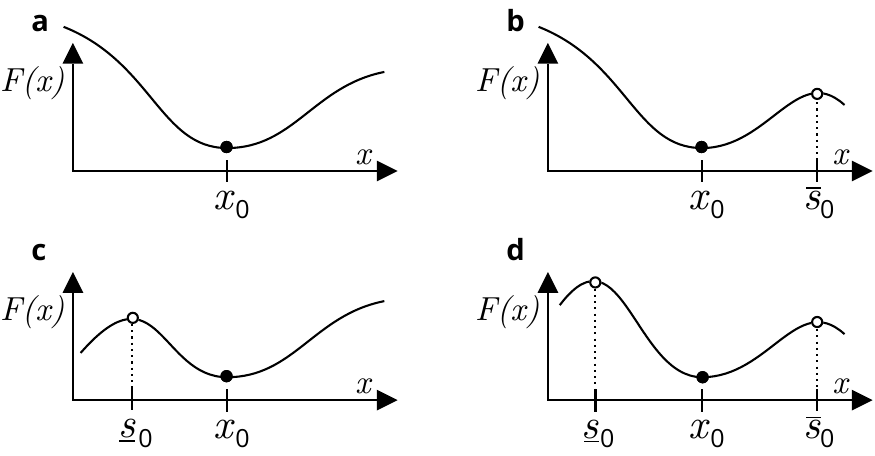}
	\caption{Unbounded vs bounded one-dimensional basins of attraction. {\bf a}. Unbounded. {\bf b},{\bf c}. Half-bounded. {\bf d}. Bounded. Stable equilibria are denoted by dots; unstable equilibria by circles.}\label{fig: basins illustration}
\end{figure}

Our goal is understanding how the presence of possible control parameters in model~\eqref{eq: generic 1D} affect the depth (i.e., local stability) and size of the basin of attraction of $x_0$, as well as understanding how depth and size can be related.

\subsection{Increasing local \RM{stability margins} of $x_0$ increases the global size of its basin of attraction}

Suppose a parameter $\alpha$ tunes the stability of $x_0$ as
\begin{equation}\label{eq: generic 1D lin control}
	\dot x = -F'(x)-\kappa(x,\alpha),
\end{equation}
where $\kappa:\R\times\R\to\R$ is smooth \RM{and satisfies the following {\it monotonicity conditions}}
\begin{equation}\label{eq: stab feedback conditions}
	\begin{aligned}
		&\kappa(x,0)=0,\quad \sgn\left(\frac{\partial\kappa}{\partial x}(x,\alpha)\right)=\sgn(\alpha),\\ &\sgn\left(\frac{\partial\kappa}{\partial \alpha}(x,0)\right)=\sgn(x-x_0).
	\end{aligned}
\end{equation}
As representative examples, $\kappa(x,\alpha)=\alpha(x-x_0)$ or $\kappa(x,\alpha)=\alpha\bar\kappa(x)$ with $\bar\kappa(\cdot)$ strictly monotone increasing and $\bar\kappa(x_0)=0$. Observe that model~\eqref{eq: generic 1D lin control} has almost global Lyapunov function $V(x,\alpha)=F(x)+\int_0^x\kappa(y,\alpha)dy$.

Increasing $\alpha$ \RM{increases the stability margins} of $x_0$ because $	\left.\frac{\partial}{\partial \alpha}\frac{\partial\dot x}{\partial x}\right|_{\subalign{x&=x_0\\\alpha&=0}}<0$, which implies $\left.\frac{\partial\dot x}{\partial x}\right|_{\subalign{x&=x_0\\\alpha&\gtrsim0}}<\left.\frac{\partial\dot x}{\partial x}\right|_{\subalign{x&=x_0\\\alpha&=0}}<0$, i.e., the eigenvalue of the linearization at $x_0$ becomes more negative as $\alpha$ is increased.
%\[
%	\left.\frac{\partial}{\partial \alpha}\frac{\partial\dot x}{\partial x}\right|_{\subalign{x&=x_0\\\alpha&=0}}<0\ \Rightarrow\ \left.\frac{\partial\dot x}{\partial x}\right|_{\subalign{x&=x_0\\\alpha&\gtrsim0}}<\left.\frac{\partial\dot x}{\partial x}\right|_{\subalign{x&=x_0\\\alpha&=0}}<0.
%\]

Let $\unders(\alpha)$ and $\overs(\alpha)$ be the boundaries of the basin of attraction of $x_0$ for sufficiently small $\alpha$. In particular, $\unders(0)=\unders_0$ and $\overs(0)=\overs_0$.

\begin{lemma}\label{lem: stabilizing increases}
The two functions $\unders(\alpha)$ and $\overs(\alpha)$ are differetiable at $\alpha=0$. Furthermore $\unders'(0)<0$ and $\overs'(0)>0$.
\end{lemma}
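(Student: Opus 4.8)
The plan is to apply the implicit function theorem to the defining equations of the basin boundaries. The boundaries $\unders(\alpha)$ and $\overs(\alpha)$ are characterized as the unstable equilibria of the controlled system~\eqref{eq: generic 1D lin control} bracketing $x_0$, i.e., as solutions of
\begin{equation}\label{eq: boundary defining}
	G(s,\alpha) := F'(s) + \kappa(s,\alpha) = 0
\end{equation}
with $\frac{\partial G}{\partial s}(s,\alpha) < 0$ (the condition that makes $s$ an unstable equilibrium of~\eqref{eq: generic 1D lin control}). At $\alpha = 0$ we have $G(\unders_0,0) = F'(\unders_0) = 0$ by~\eqref{eq: unstable eqs 1D}, and $\frac{\partial G}{\partial s}(\unders_0,0) = F''(\unders_0) < 0 \neq 0$; the same holds at $\overs_0$. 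Hence the implicit function theorem applies and yields smooth (in particular differentiable at $0$) branches $\unders(\alpha)$, $\overs(\alpha)$ with the prescribed values at $\alpha = 0$, and these are locally the unique solutions of~\eqref{eq: boundary defining} near $\unders_0$, $\overs_0$; by continuity they remain unstable equilibria and remain the innermost such on either side of $x_0$, so they are indeed the basin boundaries for small $\alpha$.

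Differentiating~\eqref{eq: boundary defining} at $\alpha = 0$ gives
\begin{equation}\label{eq: ift derivative}
	\overs'(0) = -\frac{\frac{\partial\kappa}{\partial\alpha}(\overs_0,0)}{F''(\overs_0) + \frac{\partial\kappa}{\partial s}(\overs_0,0)} = -\frac{\frac{\partial\kappa}{\partial\alpha}(\overs_0,0)}{F''(\overs_0)},
\end{equation}
where the last equality uses $\kappa(x,0) = 0$ for all $x$, which forces $\frac{\partial\kappa}{\partial s}(\cdot,0) \equiv 0$. Now I read off signs from~\eqref{eq: stab feedback conditions}: since $\overs_0 > x_0$, the condition $\sgn\!\left(\frac{\partial\kappa}{\partial\alpha}(x,0)\right) = \sgn(x - x_0)$ gives $\frac{\partial\kappa}{\partial\alpha}(\overs_0,0) > 0$, while $F''(\overs_0) < 0$ by~\eqref{eq: unstable eqs 1D}; hence $\overs'(0) > 0$. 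Symmetrically, $\unders_0 < x_0$ gives $\frac{\partial\kappa}{\partial\alpha}(\unders_0,0) < 0$ and $F''(\unders_0) < 0$, so $\unders'(0) = -\frac{\partial\kappa}{\partial\alpha}(\unders_0,0)/F''(\unders_0) < 0$.

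The only genuinely delicate point — everything else is a routine application of the implicit function theorem — is justifying that the local branches produced by the IFT really are the basin boundaries, rather than some other pair of equilibria: one must argue that no new equilibrium of~\eqref{eq: generic 1D lin control} appears between $\unders(\alpha)$ and $\overs(\alpha)$, and that $x_0$ itself persists as the stable equilibrium in that interval. This follows from continuous dependence of the vector field on $\alpha$ together with the non-degeneracy $F''(x_0) > 0$, $F''(\unders_0), F''(\overs_0) < 0$: on the compact interval $[\unders_0, \overs_0]$ the unperturbed field $-F'$ has exactly the three equilibria $\unders_0 < x_0 < \overs_0$, and for $|\alpha|$ small the perturbed field stays close to it in $C^1$, so by a standard degree/Rouché-type argument the number of equilibria (counted with the sign pattern of the derivative) is preserved on slightly shrunk and enlarged subintervals. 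I would state this as a brief lemma-internal remark and refer to standard results on structural stability of hyperbolic equilibria rather than belabor it.
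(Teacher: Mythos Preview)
Your proof is correct and follows essentially the same route as the paper: implicit differentiation of the equilibrium equation $F'(s)+\kappa(s,\alpha)=0$ (equivalently $V'(s,\alpha)=0$) at $\alpha=0$, followed by sign-reading from~\eqref{eq: unstable eqs 1D} and~\eqref{eq: stab feedback conditions}. Your version is in fact more complete, since you explicitly invoke the implicit function theorem to justify differentiability and you address why the IFT branches coincide with the basin boundaries, points the paper leaves implicit.
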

\begin{proof}
By definition $V'(\unders(\alpha),\alpha)\equiv 0$ for $\alpha$ sufficiently small. Differentiating with respect to $\alpha$ we get
\begin{equation}\label{eq: unst 1d impl deriv}
	\left(F''(\unders(\alpha))+\frac{\partial\kappa}{\partial x}(\unders(\alpha),\alpha)\right)\unders'(\alpha)+\frac{\partial\kappa}{\partial \alpha}(\unders(\alpha),\alpha)=0.
\end{equation}
Invoking~\eqref{eq: unstable eqs 1D} and~\eqref{eq: stab feedback conditions}, it follows that $\unders'(0)=-\frac{\partial\kappa}{\partial \alpha}(\unders_0,0)/F''(\unders_0)<0$. Similarly, we get $\overs'(0)>0$.
\end{proof}

The following theorem, illustrated in Figure~\ref{fig: stabilizing increases}, is a direct corollary of Lemma~\ref{lem: stabilizing increases}.

\begin{figure}
	\centering
	\includegraphics[width=0.32\textwidth]{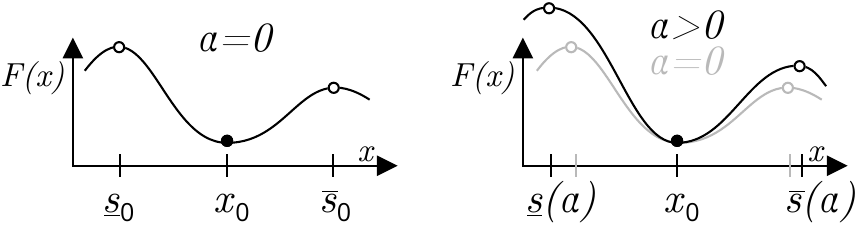}
	\caption{Increasing the stability of an exponentially stable equilibrium increases the size of its basin of attraction, and vice-versa.}\label{fig: stabilizing increases}
\end{figure}

\begin{thm}\label{thm: stabilizing increases}
	In model~\eqref{eq: generic 1D lin control}-\eqref{eq: stab feedback conditions}, enhancing the stability of $x_0$, that is, making $\left.\frac{\partial \dot x}{\partial x}\right|_{x=x_0}$ more negative, by increasing $\alpha$, also increases the size of its basin of attraction.
\end{thm}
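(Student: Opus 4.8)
The plan is to read the statement off directly from Lemma~\ref{lem: stabilizing increases}, as the paper itself announces. First I would recall that, in the bounded configuration we are focusing on, the basin of attraction of $x_0$ for model~\eqref{eq: generic 1D lin control} is exactly the open interval $(\unders(\alpha),\overs(\alpha))$, so its \emph{size} is the length $\ell(\alpha):=\overs(\alpha)-\unders(\alpha)$. The discussion preceding Lemma~\ref{lem: stabilizing increases} already establishes that increasing $\alpha$ away from $0$ makes $\left.\frac{\partial\dot x}{\partial x}\right|_{x=x_0}$ more negative, i.e., enhances the stability of $x_0$; hence it suffices to show $\ell(\alpha)>\ell(0)$ for $\alpha\gtrsim 0$.

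Next, by Lemma~\ref{lem: stabilizing increases} the functions $\unders(\cdot)$ and $\overs(\cdot)$ are differentiable at $\alpha=0$, so $\ell$ is differentiable at $\alpha=0$ with $\ell'(0)=\overs'(0)-\unders'(0)$. Again by Lemma~\ref{lem: stabilizing increases}, $\overs'(0)>0$ while $\unders'(0)<0$, whence $\ell'(0)>0$. Consequently there is $\bar\alpha>0$ such that $\ell(\alpha)>\ell(0)$ for all $\alpha\in(0,\bar\alpha)$, which is precisely the claim; the symmetric ``vice-versa'' statement (decreasing $\alpha$ shrinks the basin, cf.\ Figure~\ref{fig: stabilizing increases}) follows from the same inequality $\ell'(0)>0$.

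There is essentially no obstacle here beyond bookkeeping. The only points deserving a word of care are the identification of the basin of attraction with $(\unders(\alpha),\overs(\alpha))$ and of its size with the length of that interval, together with the persistence of the bounded configuration of equilibria for $\alpha$ small; but both are already built into the standing assumptions~\eqref{eq: stable eq 1D}--\eqref{eq: unstable eqs 1D} and into the very definition of $\unders(\alpha),\overs(\alpha)$ stated just before the lemma. Thus the theorem is, as advertised, a direct corollary of Lemma~\ref{lem: stabilizing increases}.
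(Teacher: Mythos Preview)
Your argument is correct and is exactly the unpacking of the paper's one-line claim that the theorem is a direct corollary of Lemma~\ref{lem: stabilizing increases}: you identify the basin with $(\unders(\alpha),\overs(\alpha))$, differentiate its length at $\alpha=0$, and read off $\ell'(0)=\overs'(0)-\unders'(0)>0$ from the lemma. There is nothing to add or correct.
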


\RM{Under suitable assumptions, a simple converse theorem can also be proved, i.e., increasing the size of the basin of attraction of $x_0$ increases its stability margins. Its statement and proof are not included due to space constraints.}

\section{High-dimensional Generalization}
\label{sec: high dim}

Our goal is to apply the results of the one-dimensional theory in Section~\ref{sec: 1d theory} to higher-dimensional systems of the form
\begin{equation}\label{eq: gen high system}
	\dot x=F(x,\pi),
\end{equation}
where $x\in\R^n$, $\pi\in\R^m$ is a vector of parameters, and $F:\R^n\times\R^m\to\R^n$ is smooth.

\begin{definition}\label{def: saddle}
	An unstable equilibrium of model~\eqref{eq: gen high system} is called a {\em saddle point} if its stable manifold is $n-1$-dimensional and an {\em unstable equilibrium} otherwise
\end{definition}

\begin{assumption}\label{ass: separatrices}
	\RM{For all $x_0$, the trajectory $\varphi(x_0,t)$ of~\eqref{eq: gen high system} is bounded and the only attractors of~\eqref{eq: gen high system} are exponentially stable equilibria with real simple eigenvalues. The basins of attraction are separated by the union of the stable manifolds of saddle points and any heteroclinic trajectory from a saddle to a stable equilibrium is $K$-monotone for some orthant $K$.}
\end{assumption}

\RM{
Evidently, Assumption~\ref{ass: separatrices} is satisfied by one-dimensional dynamical systems. Another important class of dynamics satisfying this assumption are monotone dynamical systems with bounded trajectories~\cite[Theorem~2.6 and below]{smith1988systems}. 
}

%Assumption~\ref{ass: separatrices} is generically satisfied, e.g., by gradient dynamical systems~\cite{hirsch1974differential} and monotone systems.
Under Assumption~\ref{ass: separatrices}, each stable equilibrium $x_0\in\R^n$ of model~\eqref{eq: gen high system} is surrounded by a set of saddle points $s_1,\ldots,s_l\in\R^n$, $l\leq n$, and associated stable and unstable manifolds, $\Ms_{s_i}$ and $\Mu_{s_i}$, $i=1,\ldots,l$, respectively. The union $\bigcup_{i=1}^l {\Ms_{s_i}}$ of the saddle stable manifolds defines the boundary of the basin of attraction of $x_0$. Each saddle unstable manifold $\Mu_{s_i}$ is \RM{the monotone heteroclinic trajectory} connecting $x_0$ and $s_i$ (see Figure~\ref{fig: 1d inv dynamics}). Let
\begin{equation}\label{eq: jacobian}
	J^\pi_x=\frac{\partial F}{\partial x}(x,\pi)
\end{equation}
be the Jacobian of model~\eqref{eq: gen high system} at $x$ \RM{with} parameters $\pi$. Then each $\Mu_{s_i}$ is tangent at $x_0$ to a distinct eigenvector $v_i$ of $J^\pi_{x_0}$ with associated eigenvalue $\lambda_i<0$.

\begin{figure}
	\centering
	\includegraphics[width=0.175\textwidth]{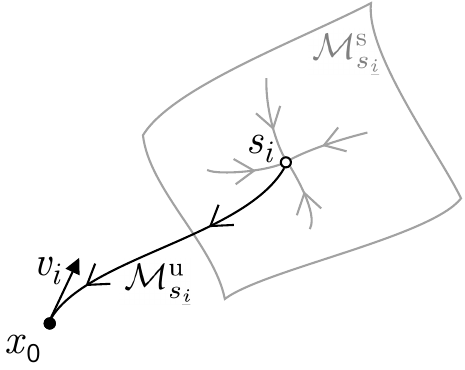}
	\caption{The one-dimensional invariant dynamics over \RM{the $K$-monotone heteroclinic trajectory} $\Mu_{s_i}$ between a stable point $x_0$ and a saddle point $s_i$, and associated eigenvector $v_i$ at $x_0$.}\label{fig: 1d inv dynamics}
\end{figure}

\RM{
\subsection{A heuristic argument.}
	To generalize the one-dimensional theory to model~\eqref{eq: gen high system} we rely on the following argument.
	\begin{itemize}
	    \item[{\it i)}] Multistability in~\eqref{eq: gen high system} is organized by the saddle stable manifolds $\Ms_{s_i}$ and the saddle-to-stable equilibrium heteroclinic orbits (unstable manifolds) $\Mu_{s_i}$.
	    \item[{\it ii)}] Because dynamics on the one-dimensional invariant sets $\Mu_{s_i}$ are monotone, the one-dimensional theory, in particular condition~\eqref{eq: stab feedback conditions}, approximately applies along each $\Mu_{s_i}$. This ensures that pushing the saddle $s_i$ (and therefore its stable manifold $\Ms_{s_i}$) away from a stable equilibrium $x_0$ and decreasing the eigenvalue $\lambda_i$ of $J^\pi_{x_0}$ associated to $v_i$ (as in Figure~\ref{fig: 1d inv dynamics}) are two complementary ways of enlarging the basin of attraction of $x_0$.
	\end{itemize}
	}

\RM{In light of this argument, the proposed generalization involves two control strategies: eigenvalue control and saddle location control. We introduce them in the following sections.}

%\RM{The proposed generalization involves two control strategies: eigenvalue control and saddle-location control. In the former, we determine by local sensitivity analysis at a stable equilibrium $x_0$ the parameter variations needed to move all or part of the spectrum of $J^\pi_{x_0}$. In the latter, we determine by local sensitivity analysis at $x_0$ and a saddle $s_i$ such that $\Mu_{s_i}$ is heteroclinic at $x_0$ the parameter variations needed to move $s_i$ away or closer to $x_0$.}

%The rationale to apply the one-dimensional theory in Section~\ref{sec: 1d theory} to model~\eqref{eq: gen high system} is that it applies along each $\Mu_{s_i}$:
%\begin{itemize}
%	\item Making the eigenvalue $\lambda_i$ more negative will also push the saddle $s_i$ (and associated stable manifold $\Ms_{s_i}$) away from $x_0$, which enlarges its basin of attraction.
%	\item Vice-versa, pushing $s_i$ away from $x_0$ will also make $\lambda_i$ more negative.
%\end{itemize}
%Proving these ideas formally is very hard in general because of the difficulty of analytically defining unstable manifolds. Here, we take instead an algorithmic approach to apply them in concrete cases and verify their efficacy {\it a posteriori}.
%\RM{(I still need to say, somehow, that this is a ``heuristic motivated by the 1D theory''.)}

\subsection{Two useful lemmas}

The following basic lemmas provide the basic tools for eigenvalue and saddle location control.
\begin{lemma}
	\label{lem: equilibrium derivative}
	Let $x^*$ be a hyperbolic equilibrium of~\eqref{eq: gen high system}, i.e., $F(x^*,\pi)=0$. Then
	$
	\frac{\partial x^*}{\partial\pi_i}=-\left(J^\pi_{x^*}\right)^{-1}\frac{\partial F}{\partial\pi_i}(x^*,\pi).
	$
\end{lemma}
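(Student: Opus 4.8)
The plan is a direct application of the implicit function theorem to the equilibrium condition $F(x,\pi)=0$, where $\pi=(\pi_1,\ldots,\pi_m)$. First I would observe that, by the definition of hyperbolicity recalled in Section~\ref{sec: not def}, the Jacobian $J^\pi_{x^*}=\frac{\partial F}{\partial x}(x^*,\pi)$ has no eigenvalue on the imaginary axis; in particular $0$ is not an eigenvalue, so $J^\pi_{x^*}$ is invertible. Since $F$ is smooth and $F(x^*,\pi)=0$ with $\frac{\partial F}{\partial x}(x^*,\pi)$ nonsingular, the implicit function theorem yields a neighborhood of $\pi$ and a smooth map $\pi\mapsto x^*(\pi)$ with $x^*(\pi)$ as given and $F(x^*(\pi),\pi)\equiv 0$ on that neighborhood. (Hyperbolicity is preserved under small perturbations, so $x^*(\pi')$ remains a hyperbolic equilibrium nearby, which justifies speaking of $\frac{\partial x^*}{\partial\pi_i}$ as the derivative of a well-defined branch.)

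Next I would differentiate the identity $F(x^*(\pi),\pi)\equiv 0$ with respect to the scalar parameter $\pi_i$, using the chain rule:
\begin{equation*}
	\frac{\partial F}{\partial x}(x^*,\pi)\,\frac{\partial x^*}{\partial\pi_i}+\frac{\partial F}{\partial\pi_i}(x^*,\pi)=0,
\end{equation*}
that is, $J^\pi_{x^*}\frac{\partial x^*}{\partial\pi_i}=-\frac{\partial F}{\partial\pi_i}(x^*,\pi)$. Left-multiplying by $\left(J^\pi_{x^*}\right)^{-1}$, whose existence was established in the first step, gives exactly $\frac{\partial x^*}{\partial\pi_i}=-\left(J^\pi_{x^*}\right)^{-1}\frac{\partial F}{\partial\pi_i}(x^*,\pi)$, as claimed.

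There is essentially no serious obstacle here: the only point requiring care is the passage from the hyperbolicity hypothesis to the invertibility of $J^\pi_{x^*}$ (immediate, since a hyperbolic equilibrium has no zero eigenvalue), which is what licenses both the implicit function theorem and the final inversion. Everything else is routine differentiation of an identity.
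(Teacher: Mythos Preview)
Your proposal is correct and follows exactly the paper's own approach: use hyperbolicity to conclude that $J^\pi_{x^*}$ is nonsingular and then invoke the implicit function theorem (the paper's proof is in fact just that one sentence). Your write-up simply spells out the chain-rule differentiation that the paper leaves implicit.
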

\begin{proof}
	Because $x^*$ is hyperbolic, $J^\pi_{x^*}$ is not singular and the lemma follows by the implicit function theorem.
\end{proof}

\begin{lemma}
	\label{lem: eigenvalue derivative}
	Let $x^*$ be a hyperbolic equilibrium of~\eqref{eq: gen high system}. Let $\lambda\in\C$ be an eigenvalue of $J^\pi_{x^*}$ with left eigenvector $w\in\C^n$ and right eigenvector $v\in\C^n$. The total derivative $D_i\lambda = \frac{\partial \lambda}{\partial x}\frac{\partial x^*}{\partial \pars_i} +  \frac{\partial \lambda}{\partial \pars_i}$ of $\lambda$ with respect to $\pi_i$ is given by
	$
	D_i\lambda = \dfrac{w^T D_i J^\pi_{x^*} v}{w^T v}
	$,
	where $D_i J^\pi_{x^*}$ is the total derivative of $J^\pi_{x^*}$ with respect to $\pi_i$, i.e. $D_i J^\pi_{x^*} = \frac{\partial J^\pi_{x^*}}{\partial x}\frac{\partial x^*}{\partial \pars_i} + \frac{\partial J^\pi_{x^*}}{\partial \pars_i}$.
\end{lemma}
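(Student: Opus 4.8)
The plan is to differentiate the eigenvalue-eigenvector relation $J^\pi_{x^*} v = \lambda v$ with respect to $\pi_i$, treating the total dependence of $J^\pi_{x^*}$ on $\pi_i$ (both explicit, through $\pi_i$, and implicit, through the equilibrium location $x^* = x^*(\pi)$). Write $A := J^\pi_{x^*}$ for brevity. First I would note that since $x^*$ is hyperbolic, $\lambda$ is not on the imaginary axis; moreover $w^T v \neq 0$ — this is the standard fact that a left and right eigenvector for the same simple eigenvalue are not orthogonal (if $\lambda$ is simple; the statement implicitly needs this, and under Assumption~\ref{ass: separatrices} the relevant eigenvalues are simple). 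By Lemma~\ref{lem: equilibrium derivative}, $\frac{\partial x^*}{\partial \pi_i}$ is well-defined, so $D_i A = \frac{\partial A}{\partial x}\frac{\partial x^*}{\partial \pi_i} + \frac{\partial A}{\partial \pi_i}$ makes sense, and $\lambda$, $v$, $w$ depend differentiably on $\pi_i$ (again invoking simplicity of $\lambda$ so that a differentiable selection of eigendata exists).

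Next I would apply the total derivative $D_i$ to $A v = \lambda v$, obtaining
\begin{equation}\label{eq: eig deriv step}
	(D_i A) v + A (D_i v) = (D_i\lambda) v + \lambda (D_i v).
\end{equation}
Then I would left-multiply~\eqref{eq: eig deriv step} by $w^T$. Since $w$ is a left eigenvector, $w^T A = \lambda w^T$, so the term $w^T A (D_i v) = \lambda w^T (D_i v)$ exactly cancels the term $\lambda w^T (D_i v)$ on the right-hand side. This is the crux of the argument: the unknown quantity $D_i v$ drops out entirely. What remains is $w^T (D_i A) v = (D_i\lambda)\, w^T v$, and dividing by $w^T v \neq 0$ gives the claimed formula $D_i\lambda = \dfrac{w^T D_i A\, v}{w^T v}$.

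The only real obstacle is the bookkeeping around the implicit dependence and the justification that $\lambda$, $v$, $w$ can be chosen to vary differentiably with $\pi_i$; this is where simplicity of the eigenvalue is essential and where one leans on Lemma~\ref{lem: equilibrium derivative} to know $x^*(\pi)$ is itself differentiable. Once differentiability is granted, the computation is the classical first-order eigenvalue perturbation identity, and the cancellation of $D_i v$ via the left eigenvector is immediate. I would close by remarking that the formula splits, via the chain rule, into the explicit and implicit contributions $D_i\lambda = \frac{\partial \lambda}{\partial x}\frac{\partial x^*}{\partial \pi_i} + \frac{\partial \lambda}{\partial \pi_i}$ precisely because $D_i A$ splits that way, matching the statement.
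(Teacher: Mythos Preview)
Your proposal is correct and follows essentially the same classical eigenvalue-perturbation argument as the paper: the paper differentiates the scalar identity $w^T J^\pi_{x^*} v = \lambda\, w^T v$ directly and notes that the terms involving $D_i w$ and $D_i v$ cancel, whereas you differentiate $J^\pi_{x^*} v = \lambda v$ first and then left-multiply by $w^T$, which is the same computation with one fewer cancellation to track. Your added remarks on simplicity of $\lambda$ and the nonvanishing of $w^T v$ fill in regularity details that the paper leaves implicit.
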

\begin{proof}
	By differentiating the equality $w^T J^\pi_{x^*} v=w^T\lambda v$ with respect to $\pi_i$ and noticing that the terms involving derivatives of $w^T$ and $v$ cancel out, we obtain $w^T \left(\dfrac{\partial J^\pi_{x^*}}{\partial x}\dfrac{\partial x^*}{\partial \pars_i} + \dfrac{\partial J^\pi_{x^*}}{\partial \pars_i}\right) v = w^T \left(\dfrac{\partial \lambda}{\partial x}\dfrac{\partial x^*}{\partial \pars_i} +  \dfrac{\partial \lambda}{\partial \pars_i}\right) v$
%	\begin{equation*}
%		w^T \left(\dfrac{\partial J^\pi_{x^*}}{\partial x}\dfrac{\partial x^*}{\partial \pars_i} + \dfrac{\partial J^\pi_{x^*}}{\partial \pars_i}\right) v = w^T \left(\dfrac{\partial \lambda_i}{\partial x}\dfrac{\partial x^*}{\partial \pars_i} +  \dfrac{\partial \lambda_i}{\partial \pars_i}\right) v
%	\end{equation*}
and the result follows
\end{proof}

\subsection{Eigenvalue control by local sensitivity analysis}

Consider a stable equilibrium $x_0$ of model~\eqref{eq: gen high system} with parameters $\pi$ and let Assumption~\ref{ass: separatrices} hold. Let $\lambda<0$ be an eigenvalue of $J^\pi_{x_0}$ with left eigenvector $w\in\R^n$ and right eigenvector $v\in\R^n$. Our goal is to make $\lambda$ more negative through suitable parameter variation. 
The gradient of $\lambda$ with respect to the vector of parameters can be computed using Lemma~\ref{lem: eigenvalue derivative} as $\nabla\lambda=\left[D_1\lambda,\ldots,D_m\lambda\right]
= \left[\dfrac{w^T D_1 J^\pi_{x^*} v}{w^T v},\ldots,\dfrac{w^T D_m J^\pi_{x^*} v}{w^T v}\right]$,
%\begin{equation*}\label{eq: lambda grad}
%	\begin{aligned}
%		\nabla\lambda&=\left[D_1\lambda,\ldots,D_m\lambda\right]
%		= \left[\dfrac{w^T D_1 J^\pi_{x^*} v}{w^T v},\ldots,\dfrac{w^T D_m J^\pi_{x^*} v}{w^T v}\right]
%	\end{aligned}
%\end{equation*}
where $D_i J^\pi_{x^*} = \frac{\partial J^\pi_{x^*}}{\partial x}\frac{\partial x^*}{\partial \pars_i} + \frac{\partial J^\pi_{x^*}}{\partial \pars_i}$ and $\frac{\partial x^*}{\partial \pars_i}$ can be computed as in Lemma~\ref{lem: equilibrium derivative}. The direction in the parameter space that leads to the fastest decrease of $\lambda$ is therefore
\begin{equation}\label{eq: lambda max decrease}
	d_\lambda = -\frac{\nabla\lambda}{\|\nabla\lambda\|}\,.
\end{equation}

Suppose all parameters can be controlled independently and no other parametric constraints are present. The proposed control strategy to decrease $\lambda$ by $\delta\!\lambda>0$ units is the following, where $0<\varepsilon\ll1$ and $n_{\rm ite}\in\mathbb N$ are control hyperparameters.
\begin{controlstrategy}\label{CS: fully actu eig}
	{\it Eigenvalue Control}
	\begin{itemize}
		\item[1.] Compute~\eqref{eq: lambda max decrease} at the current equilibrium and parameter values.
		\item[2.] Update the parameters: $\pi\leftarrow\pi+\varepsilon d_\lambda$ and compute the new equilibrium points.
	    \item[3.] Repeat steps~1 and~2 until $\lambda$ is decreased by $\delta\!\lambda$ or until the maximum number of iterations $n_{\rm ite}$ is exceeded.
	\end{itemize}
\end{controlstrategy}

%There might be situations in which not all parameters in $\pi$ can be controlled independently (e.g., some could not be controlled at all) or other constraints are present on the directions along which $\pi$ can be modulated (e.g., optimizing other control objectives).

%We assume that admissible directions along which parameters can be controlled at $\pi$ form an affine cone $\mathcal K_\pi\subset\R^m$. The resulting control strategy is as follow.
%\begin{controlstrategy}\label{CS: uner actu eig}
%	{\it Underactuated Eigenvalue Control}
%	\begin{itemize}
%		\item[1.] Compute $\tilde d_\lambda=\arg\max_{d\in\mathcal K_\pi}\frac{\langle d,d_\lambda\rangle}{\|d\|}$ at current equilibrium and parameter values.
%		\item[2.]  Update the parameters: $\pi\leftarrow\pi+\varepsilon d$ and compute the new equilibrium points.
%		\item[3.] Repeat Steps~1 and~2 at the new equilibrium and for the new set of parameters.
%		\item[4.] Repeat Steps~1,~2, and~3 until $\lambda$ decreased by $\delta\!\lambda$ or until the maximum number of iterations $n_{ite}$ is exceeded.
%	\end{itemize}
%\end{controlstrategy}

\subsection{Saddle location control by local sensitivity analysis}

%Saddle location control is a direct application of Lemma~\ref{lem: equilibrium derivative}, i.e., of the implicit function theorem.
Let $x_0$ be a stable equilibrium of model~\eqref{eq: gen high system} with parameters $\pi$ and let Assumption~\ref{ass: separatrices} hold. Let $s\in\R^n$ be a saddle point whose unstable manifold $\Mu_{s}$ \RM{is heteroclinic} to $x_0$. Let
$g_{s-x_0}(\pi)=\|s-x_0\|$
%\begin{equation}\label{eq: saddle stable dist}
%	g_{s-x_0}(\pi)=\|s-x_0\|
%\end{equation}
be the Euclidean distance between the saddle and the stable point as a function of the model parameters. By the chain rule
$
\frac{\partial g_{s-x_0}}{\partial\pi_i}=2(s-x_0)^T\left(\frac{\partial s}{\partial\pi_i}-\frac{\partial x_0}{\partial\pi_i}\right)
$,
where $\frac{\partial s}{\partial\pi_i}$ and $\frac{\partial x_0}{\partial\pi_i}$ can be computed as in Lemma~\ref{lem: equilibrium derivative}.
It follows that the distance between $s$ and $x_0$ is maximally increased along
\begin{equation}\label{eq: saddle dist max increase}
	d_{g_{s-x_0}}=\frac{\nabla g_{s-x_0}}{\|\nabla g_{s-x_0}\|}
\end{equation}
where $\nabla g_{s-x_0}=\left[\frac{\partial g_{s-x_0}}{\partial\pi_1},\ldots,\frac{\partial g_{s-x_0}}{\partial\pi_m}\right]$. Given~\eqref{eq: saddle dist max increase}, we can formulate
\RM{a control strategy to optimally increase $g_{s - x_0}(\pars)$ by $\delta\!g > 0$ units.}
%fully-actuated and underactuated saddle location control strategies to increase $g_{s-x_0}(\pi)$ by $\delta\!g>0$ units.

\begin{controlstrategy}\label{CS: fully actu saddle}
	{\it Saddle Control}
	\begin{itemize}
	    \item[1.] Compute~\eqref{eq: lambda max decrease} at the current equilibrium and parameter values.
		\item[2.] Update the parameters: $\pi\leftarrow\pi+\varepsilon d_{g_{s-x_0}}$ and compute the new equilibrium points.
		%\item[3.] Repeat Steps~1,2 at the new equilibrium and for the new set of parameters.
		% This one is weird. Why is it here?
		\item[3.] Repeat Steps~1 and~2 until $g_{s-x_0}(\pi)$ increased by $\delta\!g$ or until the maximum number of iterations $n_{\rm ite}$ is reached.
	\end{itemize}
\end{controlstrategy}

%Let $\mathcal K_\pi$ the affine cone of admissible parameter directions.
%\begin{controlstrategy}\label{CS: uner actu saddle}
%	{\it Underactuated Eigenvalue Control}
%	\begin{itemize}
%		\item[1.] Compute $\tilde d_{g_{s-x_0}}=\arg\max_{d\in\mathcal K_\pi}\frac{\langle d,d_{g_{s-x_0}}\rangle}{\|d\|}$ at the current equilibrium and paramter values.
%		\item[2.]  Update the parameters: $\pi\leftarrow\pi+\varepsilon d$, and compute the new equilibrium points.
%		\item[3.] Repeat Steps~1 and~2 at the new equilibrium and for the new set of parameters.
%		\item[4.] Repeat Steps~1,~2, and~3 until $g_{s-x_0}(\pi)$ increased by $\delta\!g$ or until the maximum number of iterations $n_{\rm ite}$ is reached.
%	\end{itemize}
%\end{controlstrategy}

\subsection{Parameter sensitivity}
\label{sec: par sens}

The entries of the parameter control vectors $d_\lambda$ and $d_{g_{s-x_0}}$, provide the sensitivity of the applied control to the parameters. Control is highly sensitive to parameters associated to entries with large absolute value: small variations of those parameters have large effects on the controlled quantity. Control is weakly sensitive to parameters associated to entries with small absolute value: big variations of those parameters are needed to  affect the controlled quantity. In practice, highly sensitive parameters define the targets of experimental manipulations to bring a system to a desired state.

\subsection{Multi-objective and underactuated control}
\RM{
There might be cases in which one needs to consider multiple control objectives at once, e.g., simultaneously increasing the distance between multiple equilibria, decreasing multiple eigenvalues, or any combination of the two control objectives.}
%We can conceive of parameter manipulation as a meta-goal in and of itself. Thus, we can consider additional cost functions that penalize doing big changes to certain parameters that might be harder to manipulate than others, constraining the movement in parameter space. There will be cases in which we will need to consider multiple control goals, be it by simply wishing to increase two or more distances between equilibrium points, to decrease two or more eigenvalues, any combination of distances and eigenvalues, or simply one control goal which considers the constraints on parameter space imposed by the meta-goal.}
\RM{In such cases, new {\it multi-objective} control strategies based on Control Strategies~\ref{CS: fully actu eig} and~\ref{CS: fully actu saddle} can be implemented using a Multiple-Gradient Descent Algorithm (MGDA)~\cite{desideri2012multiple}. MGDA returns an optimal parameter control direction $\tilde d$ by taking as input the gradients of the different control objectives and by returning a new gradient such that no objective is worsened. This method does not require additional hyper-parameters, e.g., the weights with which different gradients are weighted in the optimization procedure.
%assumes that all control goals are of the same importance, enabling us not to choose any goal priority if we wish it so.
}

\RM{Another important case is when certain parameter manipulations are hard or impossible to achieve in practice, in which case we can introduce regularizing cost functions that penalize variations along those parameters or simply project those directions to zero before applying a gradient step. In this case, the resulting control strategy is called {\it underactuated}.}

\subsection{A Two-dimensional Monotone Example}
\label{sec: 2d example}

Consider the following two-dimensional dynamics
\begin{equation}\label{eq: 2d example}
	\begin{aligned}
		\dot x&=-x^{n_x}+\alpha_x\tanh(x)-y+u_x\\
		\dot y&=-y^{n_y}+\alpha_y\tanh(y)-x+u_y
	\end{aligned}
\end{equation}
with $n_x,n_y\in\mathbb N$, $\alpha_x,\alpha_y,u_x,u_y\in\R$,
which is a monotone \RM{(because two-dimensional and competitive~\cite[Example~1]{smith1988systems}) dynamical system with parameters $\pi=(\alpha_x,\alpha_y,u_x,u_y)$.} Observe that the two exponents $n_x,n_y$ are not controllable parameters and fixed to $n_x=3$ and $n_y=5$ in what follows.

Figure~\ref{fig: 2d example phase p}({\bf a}), shows the phase portrait of model~\eqref{eq: 2d example} for nominal parameter values ($\pi=(3.0,4.0,0.3,1.0)$). Our goal is to \RM{ increase the basin of attraction of equilibrium $e_6$ (determined by the stable manifold branches highlighted in gray). We propose to do so by decreasing the real part of both eigenvalues $\lambda_1,\lambda_2$ of the linearization of model~\eqref{eq: 2d example} at equilibrium 6 by at least 1.5 units each.\footnote{This choice is for purely illustrative purposes.}}
%enhance the stability and increase the basin of attraction of equilibrium 6. More precisely, we want to decrease both eigenvalues $\lambda_1,\lambda_2$ of the linearization of model~\eqref{eq: 2d example} at equilibrium 6 by at least \RM{1.5} units \RM{each, only to show the potential of this method (I added this to emphasize on what R3.2 says, but am not a very big fan of it)}.
We apply Control Strategy~\ref{CS: fully actu eig} \RM{with} MGDA~\cite{desideri2012multiple} to simultaneously descend \RM{along} the gradients associated to the two eigenvalues. The two control hyperparameters used are $\varepsilon=10^{-2}$ and $n_{\rm ite}=1000$. \RM{As an illustration, in the underactuated control case only variations along the two most sensitive parameters were allowed, with the other two components projected to zero.}
%The approximate position of the various equilibria for nominal parameters is reported in Table~\ref{tab: 2d eqs eigs}, second column.

\begin{figure}
	\centering
	\includegraphics[width=0.475\textwidth]{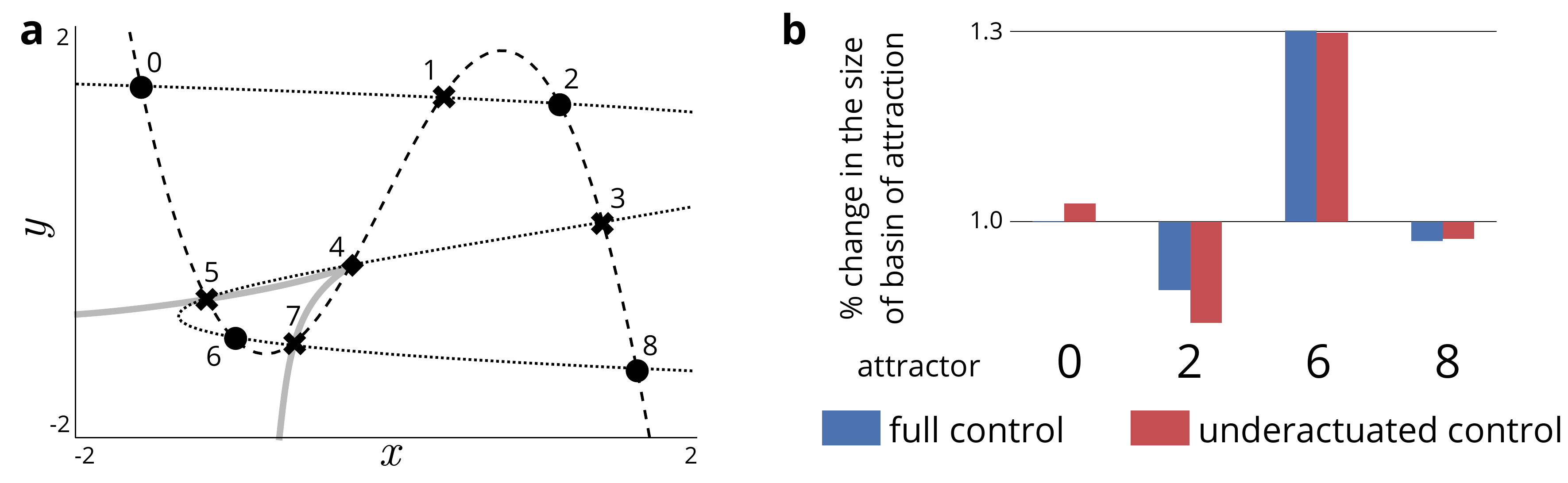}
	\caption{Phase portrait of model~\eqref{eq: 2d example} for nominal parameters ({\bf a}). Stable steady-states are marked with circles, saddle points with crosses, and unstable points with diamonds. $x$-nullcline is dashed. $y$-nullcline is dotted. \RM{ Saddle stable manifolds determining the boundaries of the basin of attraction of equilibrium $e_6$ are drawn in gray 
	 ({\bf b}). Change in the size of the basins of attraction of the four stable equilibria $e_0,e_2,e_6,e_8$ under full (blue) and underactued (red) control.
		%A monotonic decrease associated to the symmetric nature of the system can be appreciated. The percentage change in the size of the basin of attraction of 6 after optimum (blue) and realistic (red) control, calculated by Monte Carlo simulations. A clear increase in the size of the basin of attraction can be seen in both the optimum and under-actuated control strategies.
	}
	}\label{fig: 2d example phase p}
\end{figure}

%\begin{table}
%	\centering
%	{\scriptsize
%\begin{tabular}{c|c|c|c|c}
%	ID & Pos. (nom.)& Pos. (ctrl.) & Eigs. (nom.) & Eigs. (ctrl.)  \\
%	0 & $ (-1.64, 1.64) $ & $ -1.62, 1.70 $ \\ 
%	1 & $0.51, 1.27$ & $0.36, 1.41$ \\ 
%	2 & $1.09, 1.09 $ & $1.21, 1.21 $ \\ 
%	3 & $ 1.27, 0.51 $ & $1.41, 0.36$ \\ 
%	4 & $ 0.00, 0.00 $ & $-0.18, -0.18 $ \\ 
%	5 & $ -1.27, -0.51 $ & $ -1.15, -0.66 $ \\ 
%	6 & $ -1.09, -1.09$ & $-1.00, -1.00 $ \\ 
%	7 & $ -0.51, -1.27 $ & $-0.66, -1.15 $ \\ 
%	8 & $ 1.64, -1.64 $ & $1.70, -1.62 $
%\end{tabular}
%}
%\caption{Positions of equilibrium points before and after manipulation. Equilibrium ID refers to Figure~\ref{fig: 2d example phase p}.}
%\label{tab: 2d eqs eigs}
%\end{table}

\RM{Figure~\ref{fig: 2d example phase p}({\bf b}) shows the result of our control strategy after 544 iterations.
The size of the basin of attraction was estimated by randomly selecting initial conditions on the square $[-2, 2]\times[-2, 2]$ according to a uniform distribution and by recording the end state of each individual simulation. Simulation time was taken to be sufficiently long to ensure that practically steady-state was reached. The change in the size of the basin of attraction of each stable equilibrium $e_0,e_2,e_6,e_8$ was computed simply as the ratio between the number of initial conditions converging to it before and after control was applied. Both the full and the under-actuated control signal successfully increase the size of the basin of attraction of equilibrium $e_6$ by roughly $30\%$, while roughly maintaining or decreasing the size of the basin of attraction of the other stable equilibria. }

%With this, we can compare the size of the basin of attraction after Unconstrained and Constrained Control (Fig.~\ref{fig: 2d example phase p}({\bf c})).
%(I just noticed we had not mentioned ``limited'' control at this point yet.)}
%
%As predicted by our theory, the parameter control signal also increases the size of the basin of attraction of equilibrium 6.
%Tracking the evolution of the computed parameter control direction across iterations reveals the control parameter sensitivity (Figure~\ref{fig: 2d control}, center; see Section~\ref{sec: par sens}). The resulting evolution of the controlled parameters is shown in Figure~\ref{fig: 2d control}, right.

%\begin{figure}
%	\centering
%	\includegraphics[width=0.475\textwidth]{2d_cont_act}
%	\caption{{\bf a}. Evolution of the two eigenvalues of equilibrium 6 as a function of control iterations. {\bf b}. Evolution of the component of the sensitive parameter control direction $\tilde d$. {\bf c}. Evolution of the components of parameter vector $\pi$. {\bf b},{\bf c}. The association between line color/style and parameters of model~\eqref{eq: 2d example} is indicated in the legend.}\label{fig: 2d control}
%\end{figure}

\section{Control of the EMT regulatory network}
\label{sec: MET control}

Multistable networks like molecule and gene regulatory networks are described using a variety of {\it qualitative} modeling approaches, like boolean networks~\cite{somogyi1996modeling} and piece-wise linear models~\cite{DeJong2004qualitative}. Data is indeed often unavailable to derive detailed quantitative models. The goal of qualitative models is mainly to capture binary molecular interactions and the transitions between different discrete cellular states. As suggested by~\cite{lenovere2015quantitative}, there is however a middle-ground between purely qualitative and detailed quantitative modeling, in which smooth ordinary differential equation (ODEs) models are not fitted to experimental data but, even so, variables and governing parameters remain quantitative. This allows a finer understanding of the model dynamics and, crucially, of the effects that variations of biologically relevant parameters have on them beyond all-or-none harsh manipulations like gene knock-in and knock-out.
%Furthermore, the use of ODEs paves the way to the use of standard dynamical systems and control-theoretical tools.

\subsection{Model derivation}

Using boolean model reduction methods~\cite{matache2016logical,veliz2011reduction,saadatpour2013reduction}, it is possible to reduce the 9-dimensional boolean EMT model proposed in~\cite{mendez2017gene} to four boolean variables.
\footnote{\RM{The reduction is not driven by computational limitations of the method, which can be scaled nicely to high dimensions using, e.g., automatic differentiation tools. However, reducing the model dimension reduces the number of parameters to be identified/tuned for the nominal EMT dynamical behavior while still providing qualitative insights into the processes to be manipulated in an experimental setting, as discussed below.}}
 This results in a system that evolves according to the boolean difference equation~\eqref{eq:boolean_4d}, which preserves key regulatory genes and all the attractors of the full model (Table~\ref{subtab:equilibria_4d}).
\begin{table}
	\centering
{\small
		\caption{\footnotesize Four-dimensional reduction of the boolean model in~\cite{mendez2017gene} and associated boolean epithelial, senescent, and mesenchymal attractors. $\tau$ is the model discrete time. $\neg$ denotes the boolean NOT operation. $\land$ denotes the boolean AND operation. $\lor$ denotes the boolean OR operation. .}
		\label{subtab:equilibria_4d}
		\vspace{-3mm}
	{ \begin{equation}
		\label{eq:boolean_4d}
		\begin{split}
			S(\tau+1) &= \neg E(\tau) \lor \left( S(\tau) \land E(\tau) \land N(\tau) \right) \\
			E(\tau+1) &= \neg S(\tau) \lor \left( S(\tau) \land E(\tau) \land \neg N(\tau) \right) \\
			N(\tau+1) &= S(\tau) \lor E(\tau) \lor N(\tau) \lor P(\tau) \\
			P(\tau+1) &= \neg S(\tau) \land \left( \left( E(\tau) \land N(\tau) \right) \lor P(\tau) \right)
		\end{split}
	\end{equation}}
}
	{\scriptsize
	\begin{tabular}{c | c | c | c }
		Gene  (variable)               & Epithelial & Senescent & Mesenchymal \\
		\hline
		\textit{Snai2}  (S)     &     0      &     0     &      1       \\
		\textit{ESE2}    (E)    &     1      &     1     &      0       \\
		\textit{NF$\kappa$B} (N) &     1      &     1     &      1       \\
		\textit{p16}     (P)    &     0      &     1     &      0       \\
	\end{tabular}
}
\end{table}
%Methods used to transform boolean differential equations into equivalent ordinary differential equations~[...] usually aim at preserving the qualitative nature of the description, e.g., variables remain defined on the unitary hypercube, interaction strengths are not parameterized. As a consequence, the derived continuous-time models have no tuning parameters and the only possible forms of control are the all-or-none Knock-In (fixing a variable to 1) or Knock-Out (fixing a variable to 0), as in the boolean setting.
Model~\eqref{eq:boolean_4d} can be translated to a parameterized system of differential equations by mapping boolean operators to sum and products of increasing or decreasing sigmoidal functions.  We use Hill functions $H(x,p,k)=\frac{x^{p}}{x^{p}+k^{p}}$ for increasing sigmoids and $\bar H(x,p,k)=1-H(x,p,k)$ for decreasing sigmoids. With these choices, we can map boolean operators to elementary algebraic operations between sigmoids:
\begin{align*}
	&\neg x\mapsto \bar H(x,p,k)\,,\quad  x\land y\mapsto H(x,p_x,k_x)H(y,p_y,k_y)\\
	&x\lor y\mapsto H(x,n_{x},k_{x})+H(y,n_{y},k_{y})
\end{align*}
Furthermore each interaction term is assumed to be parameterized by an interaction strength $\alpha$, and each variable has a {\em linear degradation} term $-x$ and a constant {\em source term} $\beta$. The proposed translation from boolean to smooth ODEs is similar in spirit to~\cite{mendoza_method_2006} but with the crucial difference that interaction strengths and half-activations are parameterized, and that the smooth variables are not assumed to live in a unitary hypercube. The resulting quantitative dynamics are
\begin{align}\label{eq:ode_4d}
	\tfrac{dS}{dt} =& \alpha_1 \tfrac{k_1^p}{E^p + k_1^p} + \alpha_2 \tfrac{S^p}{S^p + k_2^p}  \tfrac{E^p}{E^p + k_3^p}  \tfrac{N^p}{N^p + k_4^p} + \beta_S - S \nonumber \\
	\tfrac{dE}{dt} = &\alpha_3 \tfrac{k_5^p}{S^p + k_5^p} + \alpha_4 \tfrac{E^p}{E^p + k_6^p}  \tfrac{S^p}{S^p + k_7^p} \tfrac{k_8^p}{N^p + k_8^p}+ \beta_E - E \nonumber\\
	\tfrac{dN}{dt} =& \alpha_5 \tfrac{S^p}{S^p + k_9^p} + \alpha_6 \tfrac{E^p}{E^p + k_{10}^p} + \alpha_7 \tfrac{N^p}{N^p + k_{11}^p} + \alpha_8 \tfrac{P^p}{P^p + k_{12}^p} \nonumber\\& + \beta_N - N \\
	\tfrac{dP}{dt} =& \alpha_9 \tfrac{k_{13}^p}{S^p + k_{13}^p} \left[ \alpha_{10} \tfrac{E^p}{E^p + k_{14}^p} \tfrac{N^p}{N^p + k_{15}^p} + \alpha_{11} \tfrac{P^p}{P^p + k_{16}^p} \right]\nonumber \\&+ \beta_P - P \nonumber
\end{align}
\RM{Each lumped variable ($S,E,N,P$) is associated to a whole functional module of the actual EMT regulatory network.}
In the limit $p\to\infty$, Hill functions become binary activation functions and model~\eqref{eq:ode_4d} becomes piece-wise linear, as in~\cite{DeJong2004qualitative}.
%For finite $p$, the behavior remains quantitative, i.e., interaction terms varies smoothly with the model variables.

\subsection{Control through local sensitivity}

For the nominal set of parameters $\pi$ there are 3 stable points with purely real negative eigenvalues corresponding to the epithelial, senescent, and mesenchymal phenotypes, 3 saddle points separating their basins of attraction, and 1 unstable point.
\RM{
To verify that Assumption~\ref{ass: separatrices} holds for~\eqref{eq:ode_4d} with nominal parameters we numerically approximated the saddle unstable manifolds by picking initial conditions in a small neighborhood of each saddle and letting the trajectory converge. The resulting heteroclinic orbits were found to be monotone and revealed the presence of one saddle between the epithelial and senescent equilibria and of two saddles between the epithelial and mesenchymal equilibria. The epithelial and mesenchymal equilibria do not share boundaries of their basins of attraction. A Monte Carlo on initial conditions revealed that no other periodic or `strange' attractors or separatrices existed. Finally, because the linear part of~\eqref{eq:ode_4d} is exponentially stable and the nonlinear part is bounded, it is easy to show that its trajectories are bounded and Assumption~\ref{ass: separatrices} holds.}

%
%\RM{Of these, one saddle point is part of the separatrix between the epithelial and senescent equilibrium points, while the other two saddle points and the remaining unstable equilibrium make up the separatrix between the epithelial and mesenchymal equilibria.}
%
%\RM{To verify this, simulations were started in spheres around each saddle and the endpoints were recorded. This method was also used to verify that no other stable attractors exist in the region of interest, so assumption~\ref{ass: separatrices} holds.}
Our goal is to increase the size of the basin of attraction of the epithelial equilibrium $x^*_e$. We do so by computing through MGDA the sensitive direction in the parameter space that simultaneously {\it i)} makes more negative the two eigenvalues of $J^\pi_{x^*_e}$ whose eigenvectors are tangent to the unstable manifolds of the saddle points surrounding $x^*_e$ 
%\RM{the real part of the two eigenvalues of $J^\pi_{x^*_e}$ whose eigenvectors are tangent to the unstable manifolds of the saddle points surrounding $x^*_e$ more negative}
and {\it ii)} increase the average distance $\overline{ g_{x^*_e-s_{12}}}$ between $x^*_e$ and the \RM{two} saddle points delimiting its basin of attraction. The stopping criterion is that all stable equilibria are maintained, i.e., none of them disappear in a bifurcation, \RM{or the maximum number of iterations $n_{\rm ite}=1000$ is reached}.

\begin{figure}
	\centering
	\includegraphics[width=0.475\textwidth]{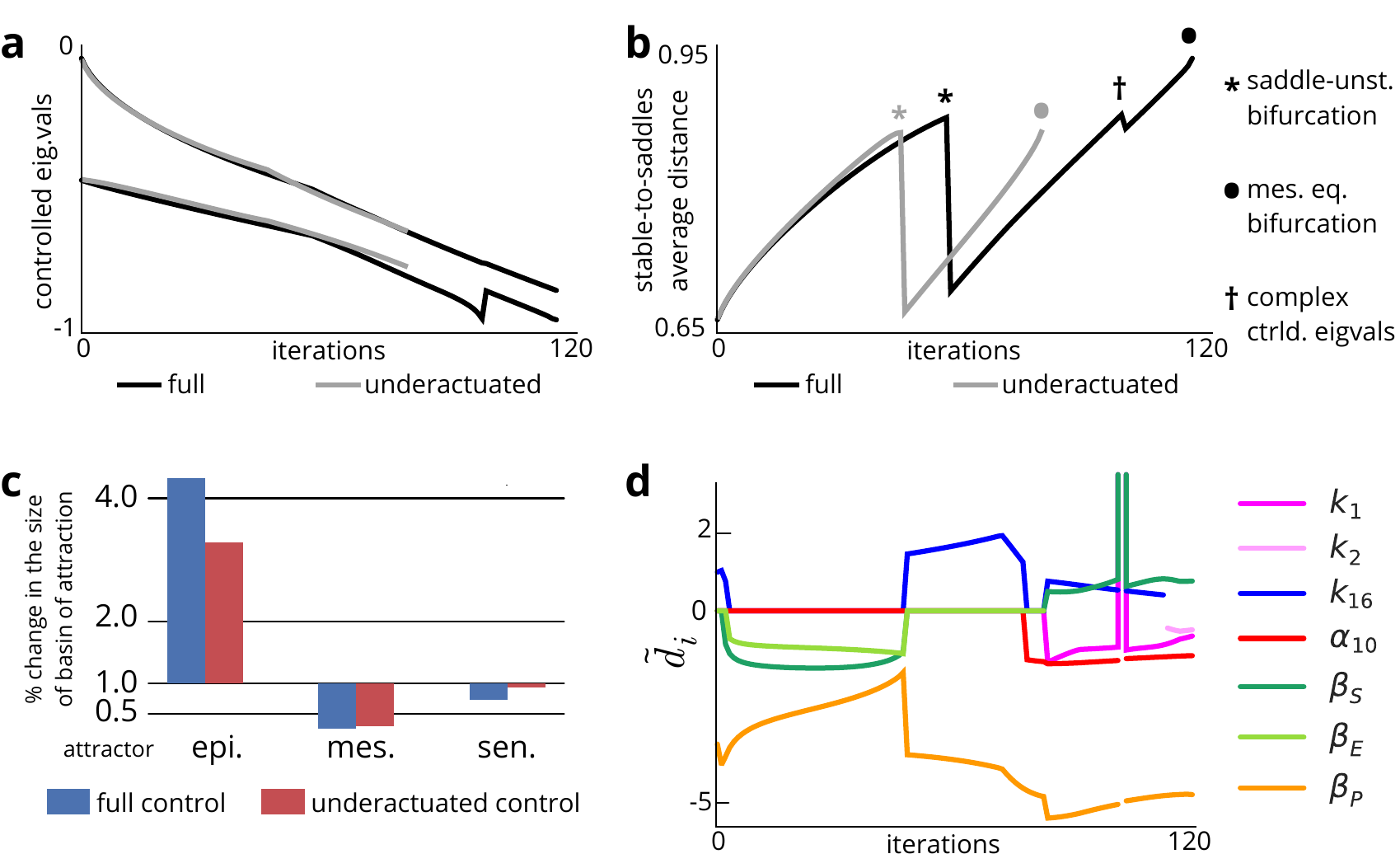}
	\caption{{\bf a}. Evolution of the two controlled eigenvalues across control iterations. {\bf b}. Evolution of the average distance between $x^*_e$ and saddle points delimiting its basin of attraction. {\bf c}. Change in the size of the basin of attraction of the three stable equilibria after either the \RM{full} control (blue) or the \RM{underactuated} control (red) are applied. {\bf d}. Components of the sensitive parameter control direction associated to the most sensitive parameters. $\bullet$: bifurcation of the mesenchymal stable equilibrium. $\ast$: bifurcation of one of the saddle\RM{s} delimiting the basin of attraction of $x^*_e$ with an\RM{other} unstable equilibrium. $\dagger$:  non-zero imaginary parts of the controlled eigenvalues.}\label{fig: 4D control}
\end{figure}

The effects of the applied control strategy are summarized in Figure~\ref{fig: 4D control}. Both controlled eigenvalues (Figure~\ref{fig: 4D control}a) and $\overline{ g_{x^*_e-s_{123}}}$ (Figure~\ref{fig: 4D control}b) change in the desired direction across control iterations. As a consequence and as predicted by our theory, the size of the basin of attraction\footnote{Computed through a Monte Carlo on initial conditions in {$[0, 4]^4$}.} of $x^*_e$  is increased \RM{four-fold} (Figure~\ref{fig: 4D control}c), that of the senescent equilibrium shrinks slightly, {and that of the mesenchymal equilibrium is reduced to approximately a third of its original size}. The abrupt drops in the evolution of $\overline{ g_{x^*_e-s_{12}}}$ correspond either to bifurcations of the model unstable equilibria or to some eigenvalues becoming complex \RM{at a stable equilibrium, which transiently violates Assumption~\ref{ass: separatrices}} (see Figure~\ref{fig: 4D control} for details).
%The method is robust to these types of dynamical phenomena.
%\footnote{The first large drop (marked with $\ast$) corresponds to the annihilation of one of the saddle delimiting the basin of attraction of $x^*_e$ with an unstable equilibrium. The smaller drop (marked with  $\dagger$) corresponds to a}. 

To test the robustness and practical applicability of our control strategy, we assumed that only three out of 32 parameters with the largest absolute sensitivity in $\tilde d$ could be manipulated at each control iteration (while other components are projected to zero). This is again a kind of underactuated control. As summarized in Figures~\ref{fig: 4D control}a,b,c, although a slight drop in performance can be detected, the control goal is robustly achieved. %\RM{(I moved this paragraph up before the previous (now next) one to make the text more coherent, and to make it clear that we are making a little analysis of the parameters relevant in the under-actuated control, to comply with comment 9 of reviewer 2.)}

Tracking the evolution of the components \RM{of the sensitive parameter control direction (Figures~\ref{fig: 4D control}d) and, in particular, of the largest three used in the underactuated control strategy,}  reveals the key parameters to be manipulated and whether they should be increased or decreased. For instance, initially the three source terms ($\beta_i$, $i=S,E,P$) must all be decreased. Subsequently, the half-activation $k_{16}$ must be increased and interaction gain  $\alpha_{10}$ decreased.
%These parameter manipulations....
%are highly counter-intuitive and hard to imagine or derive with a more  approach.
%
\RM{Due to model reduction, these parameters must be interpreted as lumped parameters corresponding to whole regulation pathways between four modules (associated to the lumped variables $S,E,N,P$) of the actual EMT regulatory network. The lumped parameter manipulations suggested by our method provide a qualitative guide of which molecular processes could be manipulated to achieve the same control objective experimentally.}
%\RM{It is important to note, however, that the parameters in this system are not real parameters, but an amalgam of different biological parameters from a multitude of interactions between many bio-molecules of different signaling cascades.
%Even though the model parameters do not have a real-life counterpart, these parameter manipulations indicate which processes might be key to the regulation of this phenomenon.
%However, a deeper knowledge on the biological interactions and intricacies of this model is required to reach further conclusions.}

\section{Conclusions}
\label{sec: conclusions}

We derived a parameter control law to control the size and depth of the basins of attraction of multistable dynamics with simple attractors~\RM{and simple separatrices. Monotone dynamical systems are important representatives of the class of models to which our method applies.}. Our control law is cheap to compute \RM{because it solely uses local, i.e., linearized, information of the model dynamics at its equilibria.} When applied to biological models,  our approach is able to suggest counter-intuitive parameter manipulations that can be tested in experimental settings to achieve desired control objectives. We illustrated this fact on the control of a  new ODE model of the EMT, with relevance for the control of tumor formation and metastasis.

\RM{The main drawback of the proposed methodology is that it is grounded on a heuristic argument. It is therefore difficult to provide theoretical guarantees. For instance, the boundaries of the relevant basins of attraction might exhibit complicated shapes that are completely unpredictable by the local methods employed here. Despite enforcing both eigenvalues and saddle control, the boundaries of the basins of attraction might bend in such a way that the proposed control strategy would turn out disruptive for the control objective. An implicit claim underlying this work is that {\it this cannot happen for monotone systems} but this still needs to be rigorously proved. Furthermore, beyond monotone dynamics, Assumption~\ref{ass: separatrices} must hold for our method to work. We were able to verify this assumption numerically for our EMT four-dimensional model but achieving the same in other models might not always be easy.}

\section{Code availability}\label{SEC: code}

The code used to run the paper simulations and generate the related figures, including the sets of used parameters, is available at GitHub: \url{https://github.com/rodrigo-moreno/basin-control-notebook}.

%@article{moreno2022CDCgithub,
%	title={Stability control by local sensitivity analysis},
%	author={Moreno-Morton, Rodrigo},
%	year={2022},
%	journal={GitHub: \url{https://github.com/rodrigo-moreno/basin-control-notebook}}
%}

\bibliographystyle{IEEEtran}
\bibliography{biblio_V2}

\end{document}